\documentclass[12pt,english]{article}
\usepackage[T1]{fontenc}
\usepackage[latin9]{inputenc}
\usepackage[letterpaper]{geometry}
\geometry{verbose,tmargin=1in,bmargin=1in,lmargin=1in,rmargin=1in}
\usepackage{textcomp}
\usepackage{amsthm}
\usepackage{amsmath}
\usepackage{graphicx}
\usepackage{setspace}
\usepackage{amssymb}
\usepackage[authoryear]{natbib}
\doublespacing

\makeatletter
\newcommand{\lyxaddress}[1]{
\par {\raggedright #1
\vspace{1.4em}
\noindent\par}
}
\theoremstyle{plain}
\newtheorem{thm}{Theorem}
  \theoremstyle{plain}
  \newtheorem{lem}[thm]{Lemma}
 \theoremstyle{definition}
  
  \theoremstyle{plain}
  \newtheorem{prop}[thm]{Proposition}

\setcounter{MaxMatrixCols}{30}

\makeatother

\usepackage{babel}

\begin{document}

\title{Measuring support for a hypothesis about a random parameter without
estimating its unknown prior}

\maketitle
\textbf{Running headline:} Support for a random hypothesis

~\\

\lyxaddress{
\lyxaddress{David R. Bickel\\
Ottawa Institute of Systems Biology\\
Department of Biochemistry, Microbiology, and Immunology\\
University of Ottawa}\newpage{}}
\begin{abstract}

For frequentist settings in which parameter randomness represents
variability rather than uncertainty, the ideal measure of the support
for one hypothesis over another is the difference in the posterior
and prior log odds. For situations in which the prior distribution
cannot be accurately estimated, that ideal support may be replaced
by another measure of support, which may be any predictor of the ideal
support that, on a per-observation basis, is asymptotically unbiased.
Two qualifying measures of support are defined. The first is minimax
optimal with respect to the population and is equivalent to a particular
Bayes factor. The second is worst-sample minimax optimal and is equivalent
to the normalized maximum likelihood. It has been extended by likelihood
weights for compatibility with more general models.

One such model is that of two independent normal samples, the standard
setting for gene expression microarray data analysis. Applying that
model to proteomics data indicates that support computed from data
for a single protein can closely approximate the estimated difference
in posterior and prior odds that would be available with the data
for 20 proteins. This suggests the applicability of random-parameter
models to other situations in which the parameter distribution cannot
be reliably estimated.

\end{abstract}
\textbf{Keywords:} empirical Bayes; indirect evidence; information
for discrimination; minimum description length; model selection; multiple
comparisons; multiple testing; normalized maximum likelihood; strength
of statistical evidence; weighted likelihood

\newpage{}

\section{\label{sec:Introduction}Introduction}

The p-value has now served science for a century as a measure of the
incompatibility between a simple (point) null hypothesis and an observed
sample of data. The celebrated advantage of the p-value is its objectivity
relative to Bayesian methods in the sense that it is based on a model
of frequencies of events in the world rather than on a model that
describes the beliefs or decisions of an ideal agent. 

On the other hand, the Bayes factor has the salient advantage that
it is easily interpreted in terms of combining with previous information.
Unlike the p-value, it is a measure of \emph{support} for one hypothesis
over another; that is, it quantifies the degree to which the data
change the odds that the hypothesis is true, whether or not a prior
odds is available in the form of known frequencies. Although the Bayes
factor does not depend on a prior probability of hypothesis truth,
it does depend on which priors are assigned to the parameter distribution
under the alternative hypothesis unless that alternative hypothesis
is simple, in which case the Bayes factor reduces to the likelihood
ratio if the null hypothesis is also simple. Unfortunately, the improper
prior distributions generated by conventional algorithms cannot be
directly applied to the Bayes factor. That has been overcome to some
extent by dividing the data into training and test samples, with the
training samples generating proper priors for use with test samples,
but at the expense of requiring the specification of training samples
and, when using multiple training samples, a method of averaging \citep{RefWorks:1023}.

On the basis of concepts defined in Section \ref{sec:Preliminaries},
Section \ref{sec:Support} will marshal results of information theory
to seize the above advantages of the p-value and Bayes factor by deriving
measures of hypothesis support of wide applicability that are objective
enough for routine scientific reporting. While such results have historically
been cast in terms of \emph{minimum description length} (MDL), an
idealized minimax length of a message encoding the data, they will
be presented herein without reliance on that analogy. For the present
paper, it is sufficient to observe that the proposed level of support
for one hypothesis over another is the difference in their MDLs and
that \citet{RefWorks:342} used a difference in previous MDLs to compare
hypotheses.

To define support in terms of the difference between posterior and
prior log-odds without relying on non-frequency probability, Section
\ref{sub:Hierarchical-model} will relate the prior probability of
hypothesis truth to the fraction of null hypotheses that are true.
This framework is the two-groups model for the analysis of gene expression
data by empirical Bayes methods \citep{RefWorks:53} and later adapted
to other data of high-dimensional biology such as those of genome-wide
association studies (\citealp{efron_large-scale_2010}; \citealp[and references]{GWAselect})
and to data of medium-dimensional biology such as those of proteins
and metabolites \citep{mediumScale,NMWL}. In such applications, each
gene or other biological feature corresponds to a different random
parameter, the value of which determines whether its null hypothesis
is true. 

While the proposed measures of hypothesis support fall under the two-groups
umbrella, they are not empirical Bayes methods since they operate
without any estimation or knowledge of prior distributions. Nonetheless,
the unknown prior is retained in the model as a distribution across
random parameters, including but not necessarily limited to those
that generate the observed data. 

Thus, the methodology of this paper is applicable to situations in
which reliable estimation the unknown two-groups prior is not possible.
Such situations often arise in practice. For example, the number of
random parameters for which measurements are available and that have
sufficient independence between parameters is often considered too
small for reliable estimation of the prior distribution. \citet{Qiu2005i}
argued that, due to correlations in expression levels between genes,
this is the case with microarray data. Less controversially, few would
maintain that the prior can be reliably estimated when only one random
parameter generated data, e.g., when the expression of only a single
gene has been recorded. Another example is the setting in which the
data cannot be reduced to continuous test statistics that adequately
meet the assumptions of available empirical Bayes methods of estimating
the prior distribution.

Section \ref{sec:Preliminaries} fixes basic notation and explains
the two-groups model. Under that framework, Section \ref{sec:Support}
defines support for one hypothesis over another in terms of a difference
between the posterior and prior log-odds. Thus, reporting support
in a scientific paper enables each reader to roughly determine what
the posterior probability of either hypothesis would be using a different
hypothetical value of its unknown prior probability. Section \ref{sec:Optimal-support}
then gives two qualifying measures of support, each of which is minimax
optimal in a different sense. In Section \ref{sec:Case-study}, one
of the optimal measures is compared to empirical Bayes methodology
using real proteomics data. That case study addresses the extent to
which optimal support on the basis of abundance measurements of a
single protein can approximate the analogous value that would be available
in the presence of measurements across multiple proteins. Finally,
Section \ref{sec:Discussion} closes with a concluding summary.

\section{\label{sec:Preliminaries}Preliminaries}

\subsection{\label{sub:Data-generating-distributions}Distributions given the
parameter values}

For all $i\in\left\{ 1,\dots,N\right\} $, the observed data vector
$x_{i}$ of $n$ observations is assumed to be the outcome of $X_{i}$,
the random variable of density function $f\left(\bullet\vert\phi_{i}\right)$
on sample space $\mathcal{X}^{n}$ for some $\phi_{i}$ in parameter
space $\Phi$. Hypotheses about $\phi_{i}$, called the \emph{full
parameter}, are stated in terms of the subparameter $\theta_{i}=\theta\left(\phi_{i}\right)$,
called the \emph{parameter of interest}, which lies in a set $\Theta$.
Consider the member $\theta_{0}$ of $\Theta$ in order to define
the null hypotheses $\theta_{1}=\theta_{0}$, $\dots$, $\theta_{i}=\theta_{0}$,
$\dots$, $\theta_{N}=\theta_{0}$. The conditional density notation
reflects the randomness of the parameter to be specified in Section
\ref{sub:Hierarchical-model}.

A measurable map $\tau:\mathcal{X}^{n}\rightarrow\mathcal{T}$ yields
$t_{i}=\tau\left(x_{i}\right)$ as the observed value of the random
test statistic $T_{i}=\tau\left(X_{i}\right)$. The application of
the map can often reduce the data to a lower-dimensions statistic,
but the identity map may be employed if no reduction is desired: $T_{i}=X_{i}=\tau\left(X_{i}\right)$.
In some cases, the map may be chosen to eliminate the nuisance parameter,
which means the probability density function of $T_{i}$, conditional
on $\theta_{i}$, may be written as $g\left(\bullet\vert\theta_{i}\right)$.
Otherwise, the interest parameter is identified with the full parameter
$\left(\theta_{i}=\theta\left(\phi_{i}\right)=\phi_{i}\right)$, in
which case $g\left(\bullet\vert\theta_{i}\right)=f\left(\bullet\vert\phi_{i}\right)$.
Thus, the following methodology applies even when the nuisance parameter
cannot be eliminated by data reduction.

\subsection{\label{sub:Hierarchical-model}Hierarchical model}

Let $P_{1}$ denote the alternative-hypothesis prior distribution,
assumed to have measure-theoretic support $\Theta$, and let $\pi_{0}$
denote the probability that a given null hypothesis is true. (Unless
prefaced by \emph{measure-theoretic}, the term \emph{support} in this
paper means strength of statistical evidence (§\ref{sec:Introduction})
rather than what it means in measure theory.) Like most hierarchical
models, including those of empirical-Bayes and random-effects methods,
this two-groups model uses random parameters to represent real variability
rather than subjective uncertainty:\begin{equation}
T_{i}\sim\pi_{0}g_{0}+\pi_{1}g_{1},\label{eq:mixture}\end{equation}
where $\pi_{1}=1-\pi_{0}$, and where $g_{0}=g\left(\bullet\vert\theta_{0}\right)$
and $g_{1}=\int g\left(\bullet\vert\theta\right)dP_{1}\left(\theta\right)$
are the null and alternative density functions, respectively. 

Let $P$ denote a joint probability distribution of $\theta$ and
$T_{i}$ such that $P_{1}=P\left(\bullet\vert\theta\ne\theta_{0}\right)$,
$P\left(\theta=\theta_{0}\right)=\pi_{0}$, and $P\left(\bullet\vert\theta=\theta_{i}\right)$
admits $g\left(\bullet\vert\theta_{i}\right)$ as the density function
of $T_{i}$ conditional on $\theta=\theta_{i}$ for all $\theta_{i}\in\Theta$.
Let $A_{i}$ denote the random variable indicating whether, for all
$i=1,\dots,N$, the $i$th null hypothesis is true $\left(A_{i}=0\right)$
or whether the alternative hypothesis is true $\left(A_{i}=1\right)$.
 For sufficiently large $N$ and sufficient independence between
random parameters, $\pi_{0}$ approximates, with high probability,
the proportion of the $N$ null hypotheses that are true.

Bayes's theorem then gives \begin{equation}
\frac{P\left(A_{i}=1\vert T_{i}=t_{i}\right)}{P\left(A_{i}=0\vert T_{i}=t_{i}\right)}=\frac{P\left(A_{i}=1\right)}{P\left(A_{i}=0\right)}\frac{g_{1}\left(t_{i}\right)}{g_{0}\left(t_{i}\right)}=\frac{\pi_{1}}{\pi_{0}}\frac{g_{1}\left(t_{i}\right)}{g_{0}\left(t_{i}\right)},\label{eq:Bayes-theorem}\end{equation}
but that cannot be used directly without knowledge of $\pi_{0}$ and
of $g_{1}$, which is unknown since $P_{1}$ is unknown. Since the
empirical Bayes strategy of estimating those priors is not always
feasible (§\ref{sec:Introduction}), the next section presents an
alternative approach for inference about whether a particular null
hypothesis is true.

\section{\label{sec:Support}General definition of support}

One distribution will be said to \emph{surrogate} the other if it
can represent or take the place of the other for inferential purposes.
Before precisely defining surrogation, the reason for introducing
the concept will be explained. Given $g_{1}^{\star}$, a probability
density function that surrogates $g_{1}$, let $P^{\star}$ denote
the probability distribution that satisfies both $P^{\star}\left(A_{i}=a\right)=P\left(A_{i}=a\right)$
for $a\in\left\{ 0,1\right\} $ and

\begin{equation}
\frac{P^{\star}\left(A_{i}=1\vert T_{i}^{\star}=t_{i}\right)}{P^{\star}\left(A_{i}=0\vert T_{i}^{\star}=t_{i}\right)}=\frac{P^{\star}\left(A_{i}=1\right)}{P^{\star}\left(A_{i}=0\right)}\frac{g_{1}^{\star}\left(t_{i}\right)}{g_{0}\left(t_{i}\right)},\label{eq:Bayes-theorem-universal}\end{equation}
where $T_{i}^{\star}$ has the mixture probability density function
$\pi_{1}g_{1}^{\star}+\pi_{0}g_{0}$ rather than that of equation
\eqref{eq:mixture}. Equation \eqref{eq:Bayes-theorem} and $P^{\star}\left(A_{i}=1\right)=P\left(A_{i}=1\right)$
entail that $P^{\star}\left(A_{i}=1\vert T_{i}^{\star}=t_{i}\right)$
surrogates $P\left(A_{i}=1\vert T_{i}=t_{i}\right)$ inasmuch as $g_{1}^{\star}$
surrogates $g_{1}$, which is unknown since it depends on $P_{1}$.
Thus, posterior probabilities of hypothesis truth can be surrogated
by using $g_{1}^{\star}$ in place of $g_{1}$. Although the surrogate
posterior probability depends on the proportion $P^{\star}\left(A_{i}=1\right)=\pi_{1}$,
the measure of support to be derived from equation \eqref{eq:Bayes-theorem-universal}
does not require that $\pi_{1}$ be known or even that it be estimated.

The concept of surrogation will be patterned after that of universality.
Let $E_{\theta_{i}}$ stand for the expectation operator defined by
$E_{\theta_{i}}\left(\bullet\right)=\int\bullet dP\left(\bullet\vert\theta=\theta_{i}\right)=\int\bullet g\left(t\vert\theta_{i}\right)dt$.
A probability density function $g_{1}^{\star}$ is \emph{universal}
for the family $\left\{ g\left(\bullet\vert\theta_{i}\right):\theta_{i}\in\Theta\right\} $
if, for any $\theta_{i}\in\Theta$, the Kullback-Leibler divergence
$D\left(g\left(\bullet\vert\theta_{i}\right)\|g_{1}^{\star}\right)=E_{\theta_{i}}\left(\log\left[g\left(T_{i}\vert\theta_{i}\right)/g_{1}^{\star}\left(T_{i}\right)\right]\right)$
satisfies \begin{equation}
\lim_{n\rightarrow\infty}D\left(g\left(\bullet\vert\theta_{i}\right)\|g_{1}^{\star}\right)/n=0.\label{eq:universality}\end{equation}
The terminology comes from the theory of universal source coding
\citep[p. 200]{RefWorks:375}; $g_{1}^{\star}$ is called {}``universal''
because it is a single density function typifying all of the distributions
of the parametric family. Equation \eqref{eq:universality} may be
interpreted as the requirement that the per-observation bias in $\log g_{1}^{\star}\left(T_{i}\right)$
as a predictor of $\log g\left(T_{i}\vert\theta_{i}\right)$ asymptotically
vanishes. This lemma illustrates the concept of universality with
an important example:
\begin{lem}
\label{lem:mixture-universality}Let $\Pi$ denote a probability distribution
that has measure-theoretic support $\Theta$. The mixture density
$\bar{g}$ defined by $\bar{g}\left(t\right)=\int g\left(t\vert\theta\right)d\Pi\left(\theta\right)$
for all $t\in\mathcal{T}$ is universal for $\left\{ g\left(\bullet\vert\theta_{i}\right):\theta_{i}\in\Theta\right\} $.
\end{lem}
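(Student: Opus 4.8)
The plan is to prove universality directly from the definition in equation \eqref{eq:universality}. Because the Kullback--Leibler divergence is nonnegative, $D\left(g\left(\bullet\vert\theta_{i}\right)\|\bar{g}\right)/n\geq0$ for every $n$, so it suffices to show that $\limsup_{n\rightarrow\infty}D\left(g\left(\bullet\vert\theta_{i}\right)\|\bar{g}\right)/n\leq0$. The one structural fact I would exploit is that $\Pi$ has measure-theoretic support $\Theta$: for any open neighborhood $U$ of a fixed $\theta_{i}\in\Theta$, this guarantees $\Pi\left(U\right)>0$. Discarding the mass of $\Pi$ outside $U$ then gives the pointwise lower bound $\bar{g}\left(t\right)\geq\int_{U}g\left(t\vert\theta\right)d\Pi\left(\theta\right)$, which is the crux of the argument because it replaces the intractable full mixture by an integral over parameters that can be taken arbitrarily close to $\theta_{i}$.

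Next I would convert this into a divergence bound via Jensen's inequality. Writing $\mu$ for the normalized restriction $\Pi\left(\bullet\cap U\right)/\Pi\left(U\right)$ and using the concavity of the logarithm, $\int_{U}g\left(t\vert\theta\right)d\Pi\left(\theta\right)=\Pi\left(U\right)\int g\left(t\vert\theta\right)d\mu\left(\theta\right)\geq\Pi\left(U\right)\exp\left(\int\log g\left(t\vert\theta\right)d\mu\left(\theta\right)\right)$. Substituting this into $\log\left[g\left(t\vert\theta_{i}\right)/\bar{g}\left(t\right)\right]$, taking the expectation $E_{\theta_{i}}$, and interchanging it with the integral over $\theta$ by Fubini's theorem yields
\[
D\left(g\left(\bullet\vert\theta_{i}\right)\|\bar{g}\right)\leq\log\frac{1}{\Pi\left(U\right)}+\frac{1}{\Pi\left(U\right)}\int_{U}D\left(g\left(\bullet\vert\theta_{i}\right)\|g\left(\bullet\vert\theta\right)\right)d\Pi\left(\theta\right).
\]
This bound cleanly separates a constant term, depending only on the fixed neighborhood through $\Pi\left(U\right)$, from an average of divergences between $\theta_{i}$ and its neighbors.

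Finally I would divide by $n$ and let $n\rightarrow\infty$ with $U$ held fixed, so that $\log\left[1/\Pi\left(U\right)\right]/n\rightarrow0$, and only afterward shrink $U$ toward $\left\{ \theta_{i}\right\} $. The main obstacle is controlling the averaged-divergence term, since $D\left(g\left(\bullet\vert\theta_{i}\right)\|g\left(\bullet\vert\theta\right)\right)$ itself grows with $n$ (for instance, it equals $n$ times the per-observation divergence when $T_{i}$ retains the full i.i.d.\ sample), so dividing by $n$ is exactly what makes the term tractable. The resolution is to invoke the regularity under which the per-observation divergence rate $\lim_{n}D\left(g\left(\bullet\vert\theta_{i}\right)\|g\left(\bullet\vert\theta\right)\right)/n$ exists, is continuous in $\theta$, and vanishes at $\theta=\theta_{i}$; given any $\varepsilon>0$ one then chooses $U$ small enough that this rate stays below $\varepsilon$ throughout $U$. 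A dominated-convergence (reverse Fatou) step, which is the technical point requiring the most care, passes the limit through the average to give $\limsup_{n}D\left(g\left(\bullet\vert\theta_{i}\right)\|\bar{g}\right)/n\leq\varepsilon$. Since $\varepsilon$ is arbitrary the limsup is at most zero, and combined with nonnegativity this establishes \eqref{eq:universality}.
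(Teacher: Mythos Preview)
Your argument is correct and shares the paper's skeleton: localize $\Pi$ to a neighborhood $U$ of $\theta_{i}$ with $\Pi(U)>0$, lower-bound $\bar g(t)$ by the contribution from $U$, take logarithms so that the constant $\log\Pi(U)$ appears additively and disappears after dividing by $n$, and close with the nonnegativity of the Kullback--Leibler divergence. That is exactly the route the paper takes.

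The execution differs in one respect worth noting. The paper works pointwise, bounding $\bar g(t)$ below by (essentially) $g(t\vert\theta_{i})\cdot\Pi(\tilde\Theta)$ and then passing directly to $\lim_{n}\log\bar g(t)/n\geq\lim_{n}\log g(t\vert\theta_{i})/n$. You instead insert Jensen's inequality to obtain the bound
\[
D\bigl(g(\bullet\vert\theta_{i})\,\|\,\bar g\bigr)\leq-\log\Pi(U)+\int_{U}D\bigl(g(\bullet\vert\theta_{i})\,\|\,g(\bullet\vert\theta)\bigr)\,d\mu(\theta),
\]
and then control the averaged pairwise divergence via continuity of the per-observation rate $\theta\mapsto\lim_{n}D(g(\bullet\vert\theta_{i})\|g(\bullet\vert\theta))/n$. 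This buys you an explicit statement of the regularity that is actually doing the work (existence and continuity of the divergence rate, plus the domination needed to pass the limit through the $\mu$-average), whereas the paper's pointwise inequality leaves those hypotheses implicit. The cost is the extra Jensen/Fubini machinery and the reverse-Fatou step you flag; the paper's version is shorter but correspondingly more heuristic.
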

\begin{proof}
By the stated assumption about $\Pi$, there is a $\tilde{\Theta}\subset\Theta$
such that $\theta_{i}\in\tilde{\Theta}$ and \begin{equation}
\int g\left(t\vert\theta\right)d\Pi\left(\theta\right)\ge\sup_{\tilde{\theta}\in\tilde{\Theta}}g\left(t\vert\tilde{\theta}\right)\int_{\tilde{\Theta}}d\Pi\left(\theta\right)\label{eq:mixture-universality}\end{equation}
for all $\theta_{i}\in\Theta$ and $t\in\mathcal{T}$. With $\sup_{\tilde{\theta}\in\tilde{\Theta}}g\left(t\vert\tilde{\theta}\right)\ge g\left(t\vert\theta_{i}\right)$
and $\bar{g}\left(t\right)=\int g\left(t\vert\theta\right)d\Pi\left(\theta\right)$,
inequality \eqref{eq:mixture-universality} entails that\[
\lim_{n\rightarrow\infty}\frac{\log\bar{g}\left(t\right)}{n}\ge\lim_{n\rightarrow\infty}\frac{\log g\left(t\vert\theta_{i}\right)+\log\int_{\tilde{\Theta}}d\Pi\left(\theta\right)}{n}=\lim_{n\rightarrow\infty}\frac{\log g\left(t\vert\theta_{i}\right)}{n}\]
for all $\theta_{i}\in\Theta$ and $t\in\mathcal{T}$. While that
yields $\lim_{n\rightarrow\infty}D\left(g\left(\bullet\vert\theta_{i}\right)\|\bar{g}\right)/n\le0$,
the information inequality has $D\left(g\left(\bullet\vert\theta_{i}\right)\|\bar{g}\right)\ge0$.
  The universality of $\bar{g}$ then follows from equation \eqref{eq:universality}.
(This proof generalizes a simpler argument using probability mass
functions \citep[p. 176]{RefWorks:375}.)
\end{proof}
Universality suggests a technical definition for surrogation. With
respect to the family $\left\{ g\left(\bullet\vert\theta_{i}\right):\theta_{i}\in\Theta\right\} $,
a probability density function $g^{\prime}$ \emph{surrogates} any
probability density function $g^{\prime\prime}$ for which \begin{equation}
\lim_{n\rightarrow\infty}E_{\theta_{i}}\left(\log\left[g^{\prime}\left(T_{i}\right)/g^{\prime\prime}\left(T_{i}\right)\right]\right)/n=0\label{eq:surrogate}\end{equation}
for all $\theta_{i}\in\Theta$. The idea is that one distribution
can represent or take the place of another for inferential purposes
if their mean per-observation difference vanishes asymptotically.
The following lemma then says that any universal distribution can
stand in the place of any other distribution that is universal for
the same family. It is a direct consequence of equations \eqref{eq:universality}
and \eqref{eq:surrogate}.
\begin{lem}
\label{lem:surrogate}If the probability density functions $g^{\prime}$
and $g^{\prime\prime}$ are universal\emph{ }for $\left\{ g\left(\bullet\vert\theta_{i}\right):\theta_{i}\in\Theta\right\} $,
then $g^{\prime}$ surrogates $g^{\prime\prime}$ with respect to
$\left\{ g\left(\bullet\vert\theta_{i}\right):\theta_{i}\in\Theta\right\} $.
\end{lem}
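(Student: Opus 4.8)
The plan is to reduce the surrogation condition \eqref{eq:surrogate} to the universality condition \eqref{eq:universality} by a single additive decomposition of the logarithm. The key observation is that since $E_{\theta_i}$ integrates against $g\left(\bullet\vert\theta_i\right)$, that density is positive on the set carrying the expectation, so I may insert it harmlessly into the ratio and write $\log\left[g'\left(T_i\right)/g''\left(T_i\right)\right]=\log\left[g'\left(T_i\right)/g\left(T_i\vert\theta_i\right)\right]+\log\left[g\left(T_i\vert\theta_i\right)/g''\left(T_i\right)\right]$ almost everywhere under $g\left(\bullet\vert\theta_i\right)$.

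I would then take $E_{\theta_i}$ of both sides and recognize the two resulting terms as Kullback--Leibler divergences, obtaining $E_{\theta_i}\left(\log\left[g'\left(T_i\right)/g''\left(T_i\right)\right]\right)=D\left(g\left(\bullet\vert\theta_i\right)\|g''\right)-D\left(g\left(\bullet\vert\theta_i\right)\|g'\right)$, because the second summand is exactly $D\left(g\left(\bullet\vert\theta_i\right)\|g''\right)$ while the first is its analogue for $g'$ with the sign reversed. Dividing through by $n$ and letting $n\rightarrow\infty$, I would invoke the universality of $g'$ and of $g''$ separately: by \eqref{eq:universality} each divergence, divided by $n$, vanishes in the limit, so their difference vanishes as well, and this holds for every $\theta_i\in\Theta$. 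That is precisely the defining condition \eqref{eq:surrogate} for $g'$ to surrogate $g''$.

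The only step meriting scrutiny is the passage from the limit of a difference to the difference of two limits. This is legitimate here, rather than an indeterminate $\infty-\infty$ form, precisely because universality supplies the existence (indeed the vanishing) of each limit individually; the finiteness of the two divergences needed to justify splitting the expectation is likewise already built into the universality hypothesis. I expect no genuine obstacle beyond this bookkeeping, which is why the lemma can be advertised as a direct consequence of \eqref{eq:universality} and \eqref{eq:surrogate}. The perfect symmetry of the decomposition in $g'$ and $g''$ moreover shows at once that surrogation runs in both directions.
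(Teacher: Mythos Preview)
Your argument is correct and is exactly the computation the paper has in mind when it declares the lemma ``a direct consequence of equations \eqref{eq:universality} and \eqref{eq:surrogate}'': the additive decomposition through $g\left(\bullet\vert\theta_i\right)$ is the only natural way to link the two definitions, and your remarks on the legitimacy of splitting the limit are the appropriate bookkeeping.
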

The inferential use of one density function in place of another calls
for a concept of surrogation error. The\emph{ surrogation error} of
each probability distribution $P^{\star}$ based on the probability
density function $g_{1}^{\star}$ in place of $g_{1}$ is defined
by \[
\varepsilon^{\star}\left(t\right)=\log\frac{P^{\star}\left(A_{i}=1\vert T_{i}^{\star}=t\right)}{P^{\star}\left(A_{i}=0\vert T_{i}^{\star}=t\right)}-\log\frac{P\left(A_{i}=1\vert T_{i}=t\right)}{P\left(A_{i}=0\vert T_{i}=t\right)}.\]
Then $P^{\star}$ is said to \emph{surrogate }$P$ if \begin{equation}
\lim_{n\rightarrow\infty}E_{\theta_{i}}\varepsilon^{\star}\left(T_{i}\right)/n=0\label{eq:approximate-posterior}\end{equation}
for all $i=1,\dots,N$ and $a\in\left\{ 0,1\right\} $. Equation
\eqref{eq:approximate-posterior} states the criterion that the per-observation
bias in $\log\left[P^{\star}\left(A_{i}=1\vert T_{i}^{\star}=T_{i}\right)/P^{\star}\left(A_{i}=0\vert T_{i}^{\star}=T_{i}\right)\right]$
as a predictor of the true posterior log odds asymptotically vanishes.
This bias is conservative:
\begin{prop}
\label{pro:conservatism}If $P^{\star}$ is based on a density function
$g_{1}^{\star}$ on $\mathcal{T}$, then $E_{\theta_{i}}\varepsilon^{\star}\left(T_{i}\right)\le0$
for all $\theta_{i}\in\Theta$.\end{prop}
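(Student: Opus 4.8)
The plan is to first collapse the surrogation error to a single log-likelihood ratio. Substituting the surrogate posterior odds \eqref{eq:Bayes-theorem-universal} and the true posterior odds \eqref{eq:Bayes-theorem} into the definition of $\varepsilon^{\star}$, the prior-odds factors cancel because $P^{\star}(A_i=a)=P(A_i=a)$, and the common null density $g_0$ cancels as well, leaving $\varepsilon^{\star}(t)=\log\left[g_1^{\star}(t)/g_1(t)\right]$. Taking $E_{\theta_i}$ and inserting $g(\bullet\vert\theta_i)$ into the numerator and denominator of the logarithm yields the identity
\[
E_{\theta_i}\varepsilon^{\star}(T_i)=D\left(g(\bullet\vert\theta_i)\,\|\,g_1\right)-D\left(g(\bullet\vert\theta_i)\,\|\,g_1^{\star}\right),
\]
so the proposition is equivalent to the claim that, for \emph{every} $\theta_i\in\Theta$, the true alternative marginal $g_1$ is at least as good a Kullback--Leibler approximation to the component $g(\bullet\vert\theta_i)$ as the surrogate $g_1^{\star}$ is.

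I would then attempt the inequality by Jensen's inequality for the concave logarithm: since $E_{\theta_i}\varepsilon^{\star}(T_i)=E_{\theta_i}\log\left[g_1^{\star}(T_i)/g_1(T_i)\right]\le\log E_{\theta_i}\left[g_1^{\star}(T_i)/g_1(T_i)\right]$, it would suffice to establish the single-number bound $\int_{\mathcal{T}}g(t\vert\theta_i)\,g_1^{\star}(t)/g_1(t)\,dt\le 1$. The attraction of this reduction is that after integrating its left-hand side against $dP_1(\theta_i)$ the factor $\int g(t\vert\theta_i)\,dP_1(\theta_i)$ becomes $g_1(t)$ by the mixture definition of $g_1$, the ratio $g_1^{\star}/g_1$ cancels, and the integral collapses to $\int g_1^{\star}=1$ exactly; equivalently, the information inequality shows that $g_1$ minimizes the $P_1$-average of $D\left(g(\bullet\vert\theta)\,\|\,\cdot\right)$. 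That route, however, delivers only the $P_1$-averaged bound, not the per-parameter one.

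The main obstacle is precisely that this cancellation is exact only on the $P_1$-average: for a fixed $\theta_i$ the reweighting of $g_1^{\star}/g_1$ by $g(\bullet\vert\theta_i)$ rather than by $g_1$ need not keep the integral at or below $1$, so universality of $g_1^{\star}$ alone does not pin down the per-parameter sign. The delicacy is sharpened by the minimax characterization of the qualifying surrogates in Section \ref{sec:Optimal-support}: because $g_1$ is itself an admissible competitor in $\inf_{q}\sup_{\theta}D\left(g(\bullet\vert\theta)\,\|\,q\right)$, one has $\sup_{\theta}D\left(g(\bullet\vert\theta)\,\|\,g_1\right)\ge\sup_{\theta}D\left(g(\bullet\vert\theta)\,\|\,g_1^{\star}\right)$, so at a least-favorable $\theta$ the displayed difference threatens to be nonnegative rather than nonpositive. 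I therefore expect the real content of the proof to be a per-component domination of $g_1^{\star}$ by $g_1$ --- something like $g_1^{\star}\le c\,g_1$ on the effective support of $g(\bullet\vert\theta_i)$ --- that must be extracted from the specific structure of the qualifying $g_1^{\star}$, together with a precise statement of the hypotheses on $g_1^{\star}$ under which the per-parameter inequality, and not merely its $P_1$-average, can hold.
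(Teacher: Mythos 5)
Your opening reduction is exactly the paper's: cancelling the prior odds (using $P^{\star}\left(A_{i}=a\right)=P\left(A_{i}=a\right)$) and the common factor $g_{0}$ between \eqref{eq:Bayes-theorem} and \eqref{eq:Bayes-theorem-universal} gives $\varepsilon^{\star}\left(t\right)=\log\left[g_{1}^{\star}\left(t\right)/g_{1}\left(t\right)\right]$. At the next step, however, the paper does the thing you rightly did not permit yourself: it asserts in one line that $E_{\theta_{i}}\varepsilon^{\star}\left(T_{i}\right)=-D\left(g_{1}\|g_{1}^{\star}\right)$ and concludes by the information inequality. That identity evaluates the expectation of $\log\left[g_{1}^{\star}\left(T_{i}\right)/g_{1}\left(T_{i}\right)\right]$ against the alternative marginal $g_{1}$; under the paper's own definition $E_{\theta_{i}}\left(\bullet\right)=\int\bullet\,g\left(t\vert\theta_{i}\right)dt$, the correct evaluation is the one you derived, $E_{\theta_{i}}\varepsilon^{\star}\left(T_{i}\right)=D\left(g\left(\bullet\vert\theta_{i}\right)\|g_{1}\right)-D\left(g\left(\bullet\vert\theta_{i}\right)\|g_{1}^{\star}\right)$, which is not $-D\left(g_{1}\|g_{1}^{\star}\right)$ for a fixed $\theta_{i}$. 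In other words, the paper's proof is your ``$P_{1}$-averaged'' computation $\int E_{\theta}\varepsilon^{\star}\left(T_{i}\right)dP_{1}\left(\theta\right)=-D\left(g_{1}\|g_{1}^{\star}\right)\le0$ presented as though it held per parameter value.

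So the obstruction you hit is real, and your one miss is the closing prediction that the authors must exploit ``specific structure of the qualifying $g_{1}^{\star}$'': the paper's proof uses no structure at all, because it tacitly performs the $g_{1}$-average that you correctly identified as the only average for which the sign is automatic. Indeed, with $E_{\theta_{i}}$ read literally, the proposition's hypothesis (``any density $g_{1}^{\star}$ on $\mathcal{T}$'') makes the claim false: taking $g_{1}^{\star}=g\left(\bullet\vert\theta_{i}\right)$ gives $E_{\theta_{i}}\varepsilon^{\star}\left(T_{i}\right)=D\left(g\left(\bullet\vert\theta_{i}\right)\|g_{1}\right)>0$ whenever $g_{1}\ne g\left(\bullet\vert\theta_{i}\right)$, and even adding universality does not help, since $g_{1}^{\star}=\int g\left(\bullet\vert\theta\right)d\Pi\left(\theta\right)$ with $\Pi$ supported on all of $\Theta$ but concentrated near $\theta_{i}$ is universal by Lemma \ref{lem:mixture-universality} yet still yields $E_{\theta_{i}}\varepsilon^{\star}\left(T_{i}\right)>0$. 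The statement is true exactly in the averaged sense you proved --- expectation under $g_{1}$, equivalently conditional on $A_{i}=1$ --- where your observation that $\int g_{1}\left[g_{1}^{\star}/g_{1}\right]=\int g_{1}^{\star}=1$, or directly $-D\left(g_{1}\|g_{1}^{\star}\right)\le0$, is the entire argument; that averaged computation is, in substance, the paper's proof. Your attempt therefore does not close the per-parameter claim, but that is because no argument can: you have in effect found an error (or at least an unstated reinterpretation of $E_{\theta_{i}}$) in the proposition as written, and your second paragraph contains a complete proof of the corrected, $g_{1}$-averaged statement.
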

\begin{proof}
The following holds for all $i=1,\dots,N$. By equations \eqref{eq:Bayes-theorem}
and \eqref{eq:Bayes-theorem-universal} with $P^{\star}\left(A_{i}=a\right)=P\left(A_{i}=a\right)$
for $a\in\left\{ 0,1\right\} $, \[
E_{\theta_{i}}\varepsilon^{\star}\left(T_{i}\right)=-D\left(g_{1}\|g_{1}^{\star}\right),\]
but $D\left(g_{1}\|g_{1}^{\star}\right)\ge0$ by the information inequality. 
\end{proof}
The next result connects the concepts of surrogation (asymptotic per-observation
unbiasedness) and universality.
\begin{thm}
If $P^{\star}$ is based on a density function $g_{1}^{\star}$ that
is universal for $\left\{ g\left(\bullet\vert\theta_{i}\right):\theta_{i}\in\Theta\right\} $,
then it surrogates $P$.\end{thm}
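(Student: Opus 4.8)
The plan is to reduce the surrogation condition \eqref{eq:approximate-posterior} to the surrogation of the unknown alternative density $g_1$ by $g_1^\star$, and then to invoke the two preceding lemmas. First I would simplify the surrogation error. Substituting the posterior-odds expressions \eqref{eq:Bayes-theorem} and \eqref{eq:Bayes-theorem-universal} into the definition of $\varepsilon^\star$ and using $P^\star\left(A_i=a\right)=P\left(A_i=a\right)$, the common prior-odds factor $\pi_1/\pi_0$ and the common null density $g_0\left(t\right)$ cancel, leaving the clean identity $\varepsilon^\star\left(t\right)=\log\left[g_1^\star\left(t\right)/g_1\left(t\right)\right]$. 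Hence $E_{\theta_i}\varepsilon^\star\left(T_i\right)=E_{\theta_i}\left(\log\left[g_1^\star\left(T_i\right)/g_1\left(T_i\right)\right]\right)$, which is exactly the quantity appearing in the surrogation criterion \eqref{eq:surrogate} applied to the pair $\left(g_1^\star,g_1\right)$.

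The key observation is that the unknown alternative density $g_1$ is itself universal. Since $g_1\left(t\right)=\int g\left(t\vert\theta\right)\,dP_1\left(\theta\right)$ is a mixture of the family over the prior $P_1$, and $P_1$ is assumed to have measure-theoretic support $\Theta$, Lemma \ref{lem:mixture-universality} applies with $\Pi=P_1$ and shows that $g_1$ is universal for $\left\{g\left(\bullet\vert\theta_i\right):\theta_i\in\Theta\right\}$.

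With both $g_1^\star$ (universal by the hypothesis of the theorem) and $g_1$ (universal by the previous step) universal for the same family, Lemma \ref{lem:surrogate} yields that $g_1^\star$ surrogates $g_1$; by \eqref{eq:surrogate} this means $\lim_{n\rightarrow\infty}E_{\theta_i}\left(\log\left[g_1^\star\left(T_i\right)/g_1\left(T_i\right)\right]\right)/n=0$ for every $\theta_i\in\Theta$. Combining this with the identity from the first step gives $\lim_{n\rightarrow\infty}E_{\theta_i}\varepsilon^\star\left(T_i\right)/n=0$ for all $\theta_i\in\Theta$ and all $i=1,\dots,N$, which is precisely condition \eqref{eq:approximate-posterior}. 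Therefore $P^\star$ surrogates $P$.

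I expect the only non-routine step to be recognizing that the unknown alternative density $g_1$ qualifies as a universal density through Lemma \ref{lem:mixture-universality}; once that is in hand, the theorem is an immediate corollary of Lemma \ref{lem:surrogate}. A minor point to verify carefully is the cancellation in the first step, namely that evaluating both posterior-odds formulas at the common argument $t$ removes the dependence on $\pi_0$, $\pi_1$, and $g_0$, so that the surrogation error reduces to the log-ratio $\log\left[g_1^\star/g_1\right]$ and never involves the unknown prior probability of hypothesis truth.
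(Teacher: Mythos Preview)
Your proposal is correct and follows essentially the same approach as the paper: both proofs reduce $\varepsilon^\star$ to $\log\left[g_1^\star/g_1\right]$ via equations \eqref{eq:Bayes-theorem} and \eqref{eq:Bayes-theorem-universal}, use Lemma \ref{lem:mixture-universality} to obtain universality of $g_1$ from the full support of $P_1$, and then invoke Lemma \ref{lem:surrogate}. The only difference is order of presentation---you simplify $\varepsilon^\star$ first, while the paper postpones that step to the end.
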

\begin{proof}
Since $P_{1}$ has measure-theoretic support $\Theta$, Lemma \ref{lem:mixture-universality}
implies that $g_{1}$ is universal for $\left\{ g\left(\bullet\vert\theta_{i}\right):\theta_{i}\in\Theta\right\} $.
The universality of $g_{1}$ and $g_{1}^{\star}$ for $\left\{ g\left(\bullet\vert\theta_{i}\right):\theta_{i}\in\Theta\right\} $
then entails that $g_{1}^{\star}$ surrogates $g_{1}$ by Lemma \ref{lem:surrogate}.
According to equation \eqref{eq:surrogate}, such surrogation means
\begin{equation}
\lim_{n\rightarrow\infty}E_{\theta_{i}}\left(\log\left[g_{1}^{\star}\left(T_{i}\right)/g_{1}\left(T_{i}\right)\right]\right)/n=0.\label{eq:universal-surrogate}\end{equation}
By equations \eqref{eq:Bayes-theorem} and \eqref{eq:Bayes-theorem-universal}
with $P^{\star}\left(A_{i}=a\right)=P\left(A_{i}=a\right)$ for $a\in\left\{ 0,1\right\} $,
\[
\lim_{n\rightarrow\infty}E_{\theta_{i}}\varepsilon^{\star}\left(T_{i}\right)/n=\lim_{n\rightarrow\infty}E_{\theta_{i}}\left(\log\left[g_{1}^{\star}\left(T_{i}\right)/g_{1}\left(T_{i}\right)\right]\right)/n,\]
which equation \eqref{eq:universal-surrogate} says is equal to 0.
\end{proof}
The difference in conditional and marginal log-odds, \begin{equation}
S^{\star}\left(t_{i}\right)=\log\frac{P^{\star}\left(A_{i}=1\vert T^{\star}=t_{i}\right)}{P^{\star}\left(A_{i}=0\vert T^{\star}=t_{i}\right)}-\log\frac{P^{\star}\left(A_{i}=1\right)}{P^{\star}\left(A_{i}=0\right)},\label{eq:support-defined}\end{equation}
is called the \emph{support }that the observation $t_{i}$ transmits
to the hypothesis that \emph{$\theta_{i}\ne\theta_{0}$} over the
hypothesis that \emph{$\theta_{i}=\theta_{0}$ according to }$P^{\star}$,
which by assumption surrogates $P$. While the concise terminology
follows \citet{RefWorks:52}, the basis on a change in log-odds is
that of the \emph{information for discrimination} \citep{RefWorks:292}.
\citet{RefWorks:123}, \citet{RefWorks:360}, and others have used
the term \emph{strength of statistical evidence} as a synonym for
support in the original sense of \citet{RefWorks:52}.
\begin{prop}
\label{pro:support-operational}If $P^{\star}$ surrogates $P$ based
on the universal density function $g_{1}^{\star}$, then the support
that the observation $t_{i}$ transmits to the hypothesis that \emph{$\theta_{i}\ne\theta_{0}$}
over the hypothesis that \emph{$\theta_{i}=\theta_{0}$} according
to $P^{\star}$ is \begin{equation}
S^{\star}\left(t_{i}\right)=\log\frac{g_{1}^{\star}\left(t_{i}\right)}{g_{0}\left(t_{i}\right)}.\label{eq:support-operational}\end{equation}
\end{prop}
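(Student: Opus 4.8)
The plan is to obtain \eqref{eq:support-operational} by direct substitution of the surrogate Bayes rule \eqref{eq:Bayes-theorem-universal} into the definition of support \eqref{eq:support-defined}, after which the prior log-odds cancel algebraically. Because $P^{\star}$ is constructed precisely so that \eqref{eq:Bayes-theorem-universal} holds, no asymptotic estimate or measure-theoretic argument is required here; the surrogation hypothesis and the universality of $g_{1}^{\star}$ enter only through the preceding theorem, which guarantees that $P^{\star}$ is a legitimate stand-in for $P$ and hence that $S^{\star}$ is a meaningful measure of support. The computation itself is purely formal.

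First I would take the logarithm of \eqref{eq:Bayes-theorem-universal}, writing the surrogate posterior log-odds as the sum of the prior log-odds $\log\left[P^{\star}\left(A_{i}=1\right)/P^{\star}\left(A_{i}=0\right)\right]$ and the log surrogate density ratio $\log\left[g_{1}^{\star}\left(t_{i}\right)/g_{0}\left(t_{i}\right)\right]$. Substituting this expression for the first term on the right-hand side of \eqref{eq:support-defined}, the prior log-odds term appears with the opposite sign and cancels the second term of \eqref{eq:support-defined}, leaving exactly $\log\left[g_{1}^{\star}\left(t_{i}\right)/g_{0}\left(t_{i}\right)\right]$, as claimed.

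There is essentially no obstacle: the conclusion is an identity that follows immediately once \eqref{eq:Bayes-theorem-universal} is recognized as the defining property of $P^{\star}$. The only points requiring care are notational—identifying the conditioning event $T^{\star}=t_{i}$ in \eqref{eq:support-defined} with $T_{i}^{\star}=t_{i}$ in \eqref{eq:Bayes-theorem-universal}, and noting that, since $g_{1}^{\star}$ is a density on $\mathcal{T}$, the ratio $g_{1}^{\star}\left(t_{i}\right)/g_{0}\left(t_{i}\right)$ is well defined wherever $g_{0}\left(t_{i}\right)>0$, so that the logarithm in \eqref{eq:support-operational} makes sense.
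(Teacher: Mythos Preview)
Your proposal is correct and follows essentially the same route as the paper: both arguments amount to taking logarithms in \eqref{eq:Bayes-theorem-universal} and substituting into the definition \eqref{eq:support-defined} so that the prior log-odds cancel, leaving \eqref{eq:support-operational}. The paper phrases the substitution in the reverse direction, but the content is identical.
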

\begin{proof}
Substituting the solution of equation \eqref{eq:support-defined}
for $g_{1}^{\star}\left(t_{i}\right)/g_{0}\left(t_{i}\right)$ into
equation \eqref{eq:support-operational} recovers equation \eqref{eq:support-defined}.
\end{proof}
Since the support according to $P^{\star}$ only depends on $P^{\star}$
through its universal density, $S\left(t_{i};g_{1}^{\star}\right)=\log\left(g_{1}^{\star}\left(t_{i}\right)/g_{0}\left(t_{i}\right)\right)$
is more simply called the support that the observation $t_{i}$ transmits
to the hypothesis that \emph{$\theta_{i}\ne\theta_{0}$} over the
hypothesis that \emph{$\theta_{i}=\theta_{0}$ according to} $g_{1}^{\star}.$
Hence, the same value of the support applies to different hypothetical
values of $\pi_{0}$ and even across different density functions as
$g_{1}$, the unknown alternative distribution of the reduced data.

\section{\label{sec:Optimal-support}Optimal measures of support}

Equations \eqref{eq:Bayes-theorem} and \eqref{eq:Bayes-theorem-universal}
with $P^{\star}\left(A_{i}=a\right)=P\left(A_{i}=a\right)$ for $a\in\left\{ 0,1\right\} $
imply that the surrogation error of $P^{\star}$ is equal to the  surrogation
error of $g_{1}^{\star}\left(t\right)$,\[
\varepsilon^{\star}\left(t\right)=\log g_{1}^{\star}\left(t\right)-\log g_{1}\left(t\right),\]
which depends neither on $\pi_{0}$ nor on any other aspect of $P^{\star}$
apart from $g_{1}^{\star}$. Thus, the problem of minimizing the surrogation
error of $P^{\star}$ reduces to that of optimizing the universal
density $g_{1}^{\star}$ on which $P^{\star}$ is based. Such optimality
may be either with respect to the population represented by $g_{1}$
or with respect to the observed sample reduced to $t_{i}$. The remainder
of this section formalizes each type of optimality as a minimax problem
with a worst-case member of $\left\{ g\left(\bullet\vert\theta_{i}\right):\theta_{i}\in\Theta\right\} $
in place of the unknown mixture density $g_{1}=\int g\left(\bullet\vert\theta\right)dP_{1}\left(\theta\right)$.

\subsection{Population optimality}

Among all probability density functions on $\mathcal{T}$, let $g_{1}^{\star}$
be that which minimizes the \emph{maximum average log loss} \begin{equation}
\sup_{\theta_{i}\in\Theta}E_{\theta_{i}}\left(\log\frac{g\left(T_{i}\vert\theta_{i}\right)}{g_{1}^{\star}\left(T_{i}\right)}\right).\label{eq:maximum-average-loss}\end{equation}
Since the loss at each $\theta_{i}$ is averaged over the population
represented by the sampling density $g_{1}$, the solution $g_{1}^{\star}$
will be called the \emph{population-optimal density function} \emph{relative
to} $\left\{ g\left(\bullet\vert\theta_{i}\right):\theta_{i}\in\Theta\right\} $.
That density function has the mixture density 

\[
g_{1}^{\star}\left(t\right)=\int g\left(t\vert\theta_{i}\right)p_{1}^{\star}\left(\theta_{i}\right)d\theta_{i}\]
for all $t\in\mathcal{T}$, where $p_{1}^{\star}$ is the probability
density function on $\Theta$ that maximizes \[
\int D\left(g_{1}\|g_{1}^{\star}\right)p_{1}^{\star}\left(\theta_{i}\right)d\theta_{i}\]
\citep[§5.2.1]{RefWorks:374}. 

The prior density function $p_{1}^{\star}$ thereby defined is difficult
to compute at finite samples but asymptotically approaches the Jeffreys
prior \citep[§2.3.2]{Rissanen2009b}, which was originally derived
for Bayesian inference from an invariance argument \citep{RefWorks:182}.
Whereas $P_{1}$ is an unknown distribution of parameter values that
describe physical reality, $p_{1}^{\star}$ is a default prior that
serves as a tool for inference for scenarios in which suitable estimates
of $P_{1}$ are not available. Lemma \ref{lem:mixture-universality}
secures the universality of $g_{1}^{\star}$, which in turn implies
that $\log\left[g_{1}^{\star}\left(t_{i}\right)/g_{0}\left(t_{i}\right)\right]$
qualifies as support by Proposition \ref{pro:support-operational}. 

For the observation $t_{i}$, $g_{1}^{\star}\left(t_{i}\right)$ may
likewise be considered as a default \emph{integrated likelihood} and
the support \eqref{eq:support-operational} as the logarithm of a
default Bayes factor. \citet{drmota_precise_2004} reviewed asymptotic
properties of the population-optimal density function and related
it to the universal density function satisfying the optimality criterion
of the next subsection.

\subsection{Sample optimality}

Among all probability density functions on $\mathcal{T}$, let $g_{1}^{\star}$
be the one that minimizes the \emph{maximum worst-case log loss} \begin{equation}
\sup_{\theta_{i}\in\Theta,t\in\mathcal{T}}\log\frac{g\left(t\vert\theta_{i}\right)}{g_{1}^{\star}\left(t\right)}.\label{eq:maximum-worst-case-loss}\end{equation}
Since the \emph{regret} $\sup_{\theta_{i}\in\Theta}\log\left[g\left(t_{i}\vert\theta_{i}\right)/g_{1}^{\star}\left(t_{i}\right)\right]$
incurred by any observed sample $t_{i}$ is no greater than that of
the worst-case sample, $g_{1}^{\star}$ will be referred to as the
\emph{sample-optimal density function} \emph{relative to} $\left\{ g\left(\bullet\vert\theta_{i}\right):\theta_{i}\in\Theta\right\} $.
As proved by \citet{RefWorks:404}, the unique solution to that minimax
problem is

\begin{equation}
g_{1}^{\star}=\frac{g\left(\bullet;\hat{\theta}_{i}\left(\bullet\right)\right)}{\int_{\mathcal{T}}g\left(t;\hat{\theta}_{i}\left(t\right)\right)dt},\label{eq:NML-reduced}\end{equation}
with the normalizing constant $Z=\int_{\mathcal{T}}g\left(t;\hat{\theta}_{i}\left(t\right)\right)dt$
automatically acting as a penalty for model complexity, where the
\emph{maximum likelihood estimate }(MLE) for any $t\in\mathcal{T}$
is denoted by $\hat{\theta}_{i}\left(t\right)=\arg\sup_{\theta_{i}\in\Theta}g\left(t\vert\theta_{i}\right)$
\citep{RefWorks:374,RefWorks:375}. The probability density $g_{1}^{\star}\left(t_{i}\right)$
is thus known as the \emph{normalized maximum likelihood }(NML). Its
universality \eqref{eq:universality} follows from the convergence
of \begin{eqnarray*}
\frac{D\left(g_{1}\|g_{1}^{\star}\right)}{n} & = & \frac{E_{\theta_{i}}\left(\log\left[g\left(T_{i}\vert\theta_{i}\right)/g\left(T_{i};\hat{\theta}_{i}\left(T_{i}\right)\right)\right]\right)}{n}+\frac{\log Z}{n}\end{eqnarray*}
to 0, which holds under the consistency of $\hat{\theta}_{i}\left(T_{i}\right)$
since the growth of $\log Z$ is asymptotically proportional to $\log n$
\citep{RefWorks:374,RefWorks:375}. Thus, Proposition \ref{pro:support-operational}
guarantees that $\log\left[g_{1}^{\star}\left(t_{i}\right)/g_{0}\left(t_{i}\right)\right]$
measures support.

For inference about $\theta_{i}$, the \emph{incidental statistics}
$t_{1},\dots,t_{i-1},t_{i+1},\dots,t_{N}$ provide side information
or {}``indirect evidence'' \citep{efron_future_2010} in addition
to the {}``direct evidence'' provided by the \emph{focus statistic}
$t_{i}$. The problem of incorporating side information into inference
has been addressed with the \emph{weighted likelihood function} $\bar{L}_{i}\left(\bullet;t_{i}\right)$
\citep{Feifang2002347,RefWorks:509} defined by\begin{equation}
\log\bar{L}_{i}\left(\theta_{i};t_{i}\right)=\sum_{j=1}^{N}w_{ij}\log g\left(t_{j}\vert\theta_{i}\right),\label{eq:weighted-likelihood}\end{equation}
for all $\theta_{i}\in\Theta$, where the \emph{focus weight }$w_{ii}$
is no less than any of the \emph{incidental weights} $w_{ij}$ $\left(j\ne i\right)$.
For notational economy and parallelism with $g\left(t_{i}\vert\theta_{i}\right)$,
the left-hand side expresses dependence on the focus statistic but
not on the incidental statistics.

Replacing the likelihood function in equation \eqref{eq:maximum-worst-case-loss}
with the weighted likelihood function, while taking the worst-case
sample of the focus statistic and holding the incidental statistics
fixed, has the unique solution \begin{equation}
g_{1i}^{\star}=\frac{\bar{L}_{i}\left(\bar{\theta}_{i}\left(\bullet\right);\bullet\right)}{\int_{\mathcal{T}}\bar{L}_{i}\left(\bar{\theta}_{i}\left(t\right);t\right)dt},\label{eq:NMWL}\end{equation}
where the \emph{maximum weighted likelihood estimate }(MWLE) for any
$t\in\mathcal{T}$ is denoted by $\bar{\theta}_{i}\left(t\right)=\arg\sup_{\theta\in\Theta}\bar{L}_{i}\left(\theta;t\right)$
\citep{NMWL}. Accordingly, $g_{1i}^{\star}$ will be called the \emph{sample-optimal
density function} \emph{relative to} $\left\{ g\left(\bullet\vert\theta_{i}\right):\theta_{i}\in\Theta\right\} $
\emph{and }$w_{i1},\dots,w_{iN}$. If $w_{ij}=\left(n+1\right)^{-1}\left(N-1\right)^{-1}$
for all $j\ne i$ and $w_{ii}=1-\left(n+1\right)^{-1}$, then $w_{i1},\dots,w_{iN}$
are \emph{single-observation weights} in the sense that $\sum_{j\ne i}w_{ij}=w_{ii}/n$
\citep{NMWL}. In accordance with equation \eqref{eq:support-operational},
the corresponding \emph{sample-optimal support} is $S_{i}^{\star}\left(t_{i}\right)=\log\left[g_{1i}^{\star}\left(t_{i}\right)/g_{0}\left(t_{i}\right)\right]$.
When data are only available for one of the $N$ populations, the
NMWL using single-observation weights may be closely approximated
by considering \begin{equation}
\log\bar{L}_{1}\left(\theta_{1};t_{1}\right)=\left(n+1\right)^{-1}\log g\left(t_{0}\vert\theta_{1}\right)+\left(1-\left(n+1\right)^{-1}\right)\log g\left(t_{1}\vert\theta_{1}\right)\label{eq:null-weights}\end{equation}
as the logarithm of the weighted likelihood, where $t_{0}$ is a pseudo-observation
such as the mode of $T_{1}$ under the null hypothesis \citep{NMWL}. 

The probability density $g_{1i}^{\star}\left(t_{i}\right)$ is called
the \emph{normalized maximum weighted likelihood }(NMWL). It applies
to more general contexts than the NML: there are many commonly used
distribution families for which $\int_{\mathcal{T}}\bar{L}_{i}\left(\bar{\theta}_{i}\left(t\right);t\right)dt$
but not $\int_{\mathcal{T}}g\left(t;\hat{\theta}_{i}\left(t\right)\right)dt$
is finite \citep{NMWL}. As with other extensions of the NML to such
families \citep[Chapter 11]{RefWorks:375}, conditions under which
the NMWL is universal have yet to be established. Thus, Proposition
\ref{pro:support-operational} cannot be invoked at this time, and
one may only conjecture that $S_{i}^{\star}\left(t_{i}\right)$ satisfies
the general criterion of a measure of support (§\ref{sec:Support})
in a particular context. The conjecture is suggested for the normal
family by the finding of the next section that $g_{1i}^{\star}\left(t_{i}\right)$
can closely approximate a universal density even for very small samples.

\section{\label{sec:Case-study}Proximity to simultaneous inference: a case
study}

This section describes a case study on the extent to which support
computed on the basis of measurements of the abundance of a single
protein can approximate the true difference between posterior and
prior log odds. Since that true difference is unknown, it will be
estimated using an empirical Bayes method to simultaneously incorporate
the available abundance measurements for all proteins. 

Specifically, the individual sample-optimal support of each protein
was compared to an estimated Bayes factor using levels of protein
abundance in plasma as measured in the laboratory of Alex Miron at
the Dana-Farber Cancer Institute. The participating women include
55 with HER2-positive breast cancer, 35 mostly with ER/PR-positive
breast cancer, and 64 without breast cancer. The abundance levels,
available in \citet{ProData2009b}, were transformed by shifting them
to ensure positivity and by taking the logarithms of the shifted abundance
levels \citep{mediumScale}. 

The transformed abundance levels of protein $i$ were assumed to be
IID normal within each health condition and with an unknown variance
$\sigma_{i}^{2}$ common to all three conditions. For one of the cancer
conditions and for the non-cancer condition, $\mu_{i}^{\text{cancer}}$
and $\mu_{i}^{\text{healthy}}$ will denote the means of the respective
normal distributions, and $n^{\text{cancer}}\in\left\{ 55,35\right\} $
and $n^{\text{healthy}}=64$ will likewise denote the numbers of women
with each condition. Let $T_{i}$ represent the absolute value of
the Student $t$ statistic appropriate for testing the null hypothesis
of $\theta_{i}=0$, where $\theta_{i}=\left|\delta_{i}\right|$ and
\[
\delta_{i}=\frac{\mu_{i}^{\text{cancer}}-\mu_{i}^{\text{healthy}}}{\sigma_{i}/\left(m^{-1}+n^{-1}\right)^{-1/2}},\]
the standardized cancer-healthy difference in the population mean
transformed abundance in the $i$th protein. Under the stated assumptions,
the Student $t$ statistic, conditional on $\delta_{i}$, has a noncentral
$t$ distribution with $n^{\text{cancer}}+n^{\text{healthy}}-2$ degrees
of freedom and noncentrality parameter $\delta_{i}$ \citep{mediumScale}.
Thus, because $T_{i}$ is the absolute value of that statistic, $\theta_{i}$
is the only unknown parameter of $g\left(\bullet\vert\theta_{i}\right)$,
the probability density function of $T_{i}\vert\theta_{i}$. 

With that model and test statistic, the NMWL and the corresponding
sample-optimal support were computed separately for each protein using
$t_{i}=0$ in equation \eqref{eq:null-weights}, as in \citet{NMWL}.
For the analysis of the data of all proteins simultaneously, the same
model and test statistics were used with the two-component mixture
model defined by equation \eqref{eq:mixture} with $g_{1}=g\left(\bullet\vert\theta_{\text{alternative}}\right)$
for some unknown $\theta_{\text{alternative}}\in\Theta$. The true
alternative density function $g_{1}$ was estimated by plugging in
the maximum likelihood estimate $\hat{\theta}_{\text{alternative}}$
obtained from maximizing the likelihood function\[
\prod_{i=1}^{N}\left(\pi_{0}g\left(t_{i}\vert0\right)+\left(1-\pi_{0}\right)g\left(t_{i}\vert\theta_{\text{alternative}}\right)\right)\]
over $\theta_{\text{alternative}}$ and $\pi_{0}$ \citep{mediumScale}.
The results appear in Fig. \ref{fig:supportABCB} and are discussed
in the next section.

\begin{figure}
\includegraphics[scale=0.8]{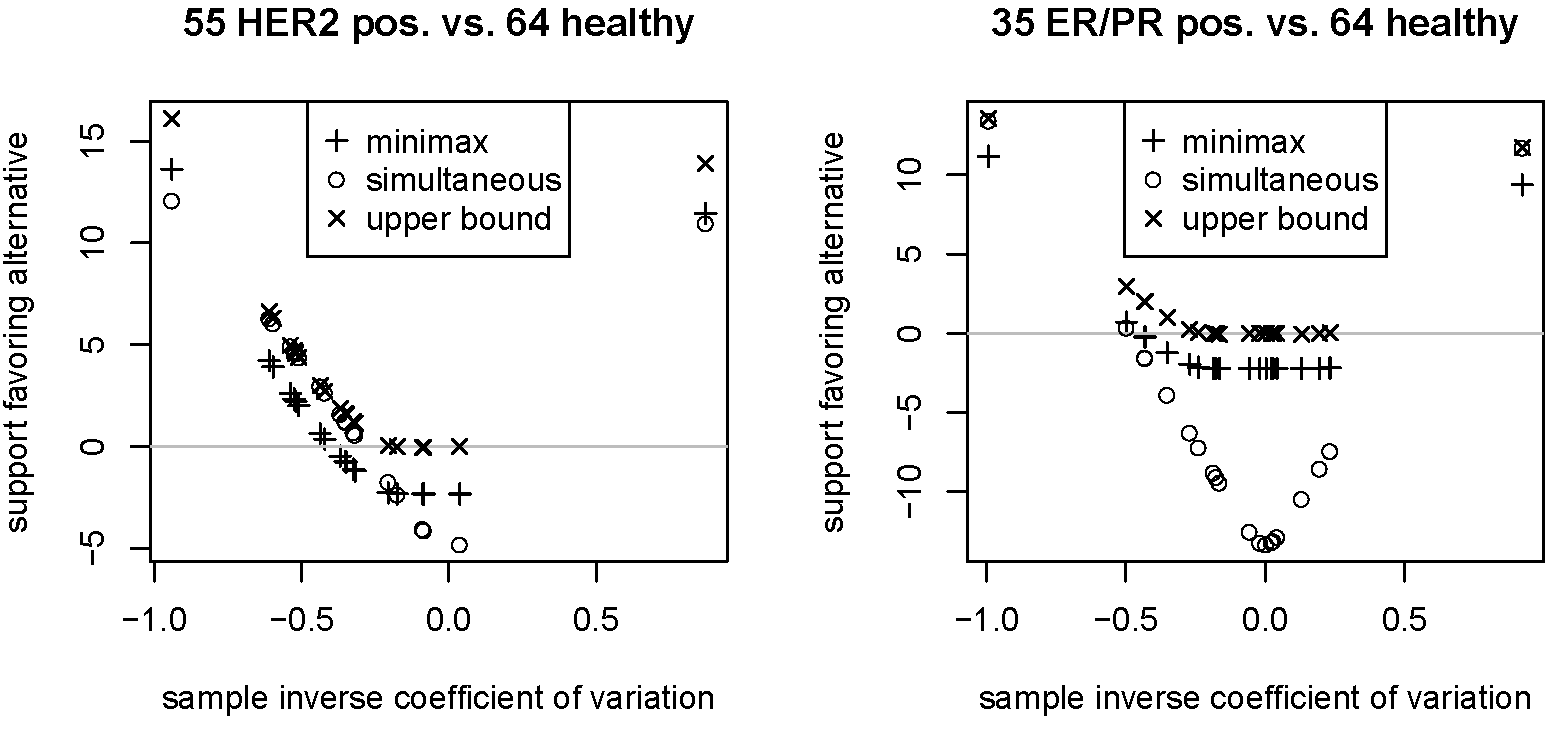}

\caption{\label{fig:supportABCB}Single-comparison, sample-optimal support
({}``minimax''; $g_{1}^{\star}\left(t_{i}\right)/g_{0}\left(t_{i}\right)$)
as an approximation to the estimated support that could be achieved
with multiple comparisons ({}``simultaneous''; $g\left(t_{i}\vert\hat{\theta}_{\text{alternative}}\right)/g_{0}\left(t_{i}\right)$).
The {}``upper bound'' is $\max_{\theta\in\Theta}g\left(t_{i}\vert\theta\right)/g_{0}\left(t_{i}\right)$
\citep{RefWorks:435}, exceeding the optimal support by a constant
amount.}

\end{figure}

\section{\label{sec:Discussion}Discussion}

The proposed framework of evidential support may be viewed as an extension
of likelihoodism, classically expressed in \citet{RefWorks:52}, to
nuisance parameters and multiple comparisons. \citet[§3.2]{RefWorks:52}
argued that a measure of evidence in data or support for one simple
hypothesis (sampling distribution) over another should be compatible
with Bayes's theorem in the sense that whenever real-world parameter
probabilities are available, the support quantifies the departure
of posterior odds from prior odds. The likelihood ratio has that property,
but the p-value does not since it only depends on the distribution
of the null hypothesis. As compelling as the argument is for comparing
two simple hypotheses, the pure likelihood approach does not apply
to a composite hypothesis, a set of sampling distributions.

Perceiving the essential role of composite hypotheses in many applications,
\citet{zhang-2009} previously extended the likelihoodism by replacing
the likelihood for the single distribution that represents a simple
hypothesis with the likelihood maximized over all parameter values
that constitute a composite hypothesis. Thus, the strength of evidence
for the alternative hypothesis that $\phi$ is in some interval (or
union of intervals) $\Phi_{1}$ over the null hypothesis that $\phi$
is in some other interval $\Phi_{0}$ would be $\max_{\phi\in\Phi_{1}}f\left(x_{i}\vert\phi\right)/\max_{\phi\in\Phi_{0}}f\left(x_{i}\vert\phi\right)$.
For example, the strength of evidence favoring $\phi\ne\phi_{0}$
over $\phi=\phi_{0}$ would be $\max_{\phi\in\Phi}f\left(x_{i}\vert\phi\right)/f\left(x_{i}\vert\phi_{0}\right)$.
The related approach of \citet{RefWorks:435} performs the maximization
after eliminating the nuisance parameter: $\max_{\theta\in\Theta}g\left(t_{i}\vert\theta\right)/g\left(t_{i}\vert\theta_{0}\right)$.
While that approach to some extent justifies the use of likelihood
intervals \citep{RefWorks:985} and has intuitive support from the
principle of inference to the best explanation \citep{RefWorks:435},
it tends to overfit the data from a predictive viewpoint. For example,
if $\theta_{1}=\arg\max_{\theta\in\Theta_{1}}L\left(\theta\right)$,
then the evidence for the hypothesis that $\theta\in\Theta_{1}$ would
be just as strong as the evidence for the hypothesis that $\theta=\theta_{1}$
even if the latter hypothesis were in primary view before observing
$x$. Thus, the maximum likelihood ratio is considered as an upper
bound of support in Fig. \ref{fig:supportABCB}.

The present paper also generalizes the pure likelihood approach but
without such overfitting. The proposed approach grew out of the Bayes-compatibility
criterion of \citet{RefWorks:52}. By leveraging recent advances in
J. Rissanen's information-theoretic approach to model selection, the
Bayes-compatibility criterion was recast in terms of predictive distributions,
thereby making support applicable to composite hypotheses. To qualify
as a measure of support, a statistic must asymptotically mimic the
difference between the posterior and prior log-odds, where the parameter
distributions considered are physical in the empirical Bayes or random
effects sense that they correspond to real frequencies or proportions
\citep{robinson1991b}, whether or not the distributions can be estimated. 

Generalized Bayes compatibility has advantages even when support is
not used with a hypothetical prior probability. For example, defining
support in terms of the difference between the posterior and prior
log-odds \eqref{eq:support-defined} is sufficient for interpreting
$S^{\star}\left(t_{i}\right)\ge5$ or some other some level of support
in the same way for any sample size \citep{RefWorks:469}. In other
words, no sample-size calibration is necessary \citep[cf.][]{NMWL}.

In addition to the Bayes-compatibility condition, an optimality criterion
such as one of the two lifted from information theory is needed to
uniquely specify a measure of support (§\ref{sec:Optimal-support}).
One of the resulting minimax-optimal measures of support performed
well compared to the upper bound when applied to measured levels of
a single protein (§\ref{sec:Case-study}). The standard of comparison
was the difference between posterior and prior log odds that could
be estimated by simultaneously using the measurements of all 20 proteins.
While both the minimax support and the upper bound come close to the
simultaneous-inference standard, the conservative nature of the minimax
support prevented it from overshooting the target as much as did the
upper bound (Fig. \ref{fig:supportABCB}). The discrepancy between
the minimax support and the upper bound will become increasingly important
as the dimension of the interest parameter increases. In high-dimensional
applications, overfitting will render the upper bound unusable, but
minimax support will be shielded by  a correspondingly high penalty
factor $\int_{\mathcal{T}}g\left(t;\hat{\theta}_{i}\left(t\right)\right)dt$
in equation \eqref{eq:NML-reduced}.

\section*{Acknowledgments}

\texttt{Biobase} \citep{RefWorks:161} facilitated data management.
This research was partially supported  by the Canada Foundation for
Innovation, by the Ministry of Research and Innovation of Ontario,
and by the Faculty of Medicine of the University of Ottawa.

\begin{flushleft}
\bibliographystyle{elsarticle-harv}
\bibliography{refman}

\par\end{flushleft}

\lyxaddress{
\lyxaddress{David R. Bickel\\
Ottawa Institute of Systems Biology\\
Department of Biochemistry, Microbiology, and Immunology\\
University of Ottawa\\
451 Smyth Road\\
Ottawa, Ontario, K1H 8M5\\
dbickel@uottawa.ca}}
\end{document}